\setlist[enumerate,1]{label=(\roman*)}
\numberwithin{equation}{section}
\newcommand{\cD}{\mathcal{D}} 
\newcommand{\sA}{\mathsf{A}} 
\newcommand{\sG}{\mathsf{G}} 
\newcommand{\sV}{\mathsf{V}} 
\newcommand{\sX}{\mathsf{X}} 
\newcommand{\sZ}{\mathsf{Z}} 
\title{Unbounded Markov Dynamic Programming with Weighted Supremum Norm Perov Contractions}
\author{Alexis Akira Toda\thanks{Department of Economics, University of California San Diego. Email: \href{mailto:atoda@ucsd.edu}{atoda@ucsd.edu}.}}
\begin{document}
\maketitle
	
\begin{abstract}
	
This paper shows the usefulness of the Perov contraction theorem, which is a generalization of the classical Banach contraction theorem, for solving Markov dynamic programming problems. When the reward function is unbounded, combining an appropriate weighted supremum norm with the Perov contraction theorem yields a unique fixed point of the Bellman operator under weaker conditions than existing approaches. An application to the optimal savings problem shows that the average growth rate condition derived from the spectral radius of a certain nonnegative matrix is sufficient and almost necessary for obtaining a solution.
	
\medskip
		
\textbf{Keywords:} Dynamic programming, Gelfand formula, optimal savings, Perov contraction, spectral radius, weighted supremum norm.
		
		
\end{abstract}
	
\section{Introduction}

The classical approach to solving infinite-horizon dynamic programming problems is to show that the Bellman operator is a contraction on a space of candidate value functions and apply the Banach contraction mapping theorem to establish the existence and uniqueness of a value function satisfying the Bellman equation \citep{Shapley1953,Blackwell1965,Denardo1967}. An underlying assumption to this approach is that the reward function is bounded and hence we may consider the Banach space of bounded functions endowed with the supremum norm as the space for candidate value functions.

However, many reward functions commonly used in applications are unbounded. To deal with these situations, instead of using the supremum norm
\begin{equation*}
	\norm{v}\coloneqq \sup_{x\in \sX}\abs{v(x)},
\end{equation*}
where $\sX$ denotes the state space, one could use the weighted supremum norm defined by
\begin{equation*}
	\norm{v}_\kappa\coloneqq \sup_{x\in \sX}\frac{\abs{v(x)}}{\kappa(x)},
\end{equation*}
where $\kappa$ is some positive weight function. If the reward and value functions can be shown to be bounded above by some positive multiple of $\kappa$, we may apply the contraction approach after rescaling the reward and value functions by $\kappa$ and recover existence, uniqueness, and other optimality results. This ``weighted supremum norm'' approach was pioneered by \citet{Lippman1975,Wessels1977} and has been widely applied; see \citet{Boyd1990} and \citet{Duran2000,Duran2003} for economic applications and \citet[Ch.~8]{Hernandez-LermaLasserre1999}, \citet[Ch.~12]{Stachurski2009}, and \citet{Bertsekas2018} for textbook treatments.

One limitation of the existing weighted supremum norm approach is that the sufficient conditions are often too strong for common applications. For instance, consider Assumption 8.3.2(b) in \cite{Hernandez-LermaLasserre1999}, which can be written (with some changes in notation) as
\begin{equation}
	\beta \sup_{x\in \sX} \sup_{a\in \Gamma(x)}\frac{\E[\kappa(x') \mid x,a]}{\kappa(x)}<1, \label{eq:weight_asmp}
\end{equation}
where $\beta$ is the discount factor, $\Gamma(x)$ is the set of feasible actions given the current state $x$, and $x'$ is the next period's state given the current state and action $(x,a)$. Note that the condition \eqref{eq:weight_asmp} implies that the conditional expected growth rate of the weight function, $\E[\kappa(x') \mid x,a]/\kappa(x)$, is bounded above by $1/\beta$ uniformly over the state $x\in \sX$. Such a condition is very strong because it imposes an upper bound on the maximum growth rate of the system, which is often undesirable for particular applications.\footnote{As an illustration, consider a model in which the system switches between ``expansions'' with high growth and ``recessions'' with low growth. Conditions of the form \eqref{eq:weight_asmp} significantly restrict the maximum growth rate, which could make the model unrealistic.}

This paper seeks to relax the condition \eqref{eq:weight_asmp} within the weighted supremum norm framework. Although mathematically imprecise, roughly speaking, the uniform growth rate condition \eqref{eq:weight_asmp} is replaced with the average growth rate condition
\begin{equation}
	\beta \E\left[\sup_{a\in \Gamma(x)}\frac{\E[\kappa(x') \mid x,a]}{\kappa(x)}\right]<1, \label{eq:weight_asmp2}
\end{equation}
which is much weaker. To obtain this result, I apply a generalization of the Banach contraction theorem due to \citet{Perov1964}. While a contraction $T$ defined on a complete metric space satisfies
\begin{equation*}
	d(Tv_1,Tv_2)\le \beta d(v_1,v_2),
\end{equation*}
where $d$ is some metric and $0\le \beta<1$ is the modulus of contraction, a Perov contraction satisfies
\begin{equation*}
	d(Tv_1,Tv_2)\le Bd(v_1,v_2),
\end{equation*}
where $d$ is a vector-valued metric and $B$ is a nonnegative matrix with spectral radius $\rho(B)<1$. The Perov contraction theorem states that a Perov contraction admits a unique fixed point, and its proof is nearly identical to that of the classical Banach contraction theorem. Although the Perov contraction theorem does not seem to be well known in optimal control theory (it was initially developed to study systems of ordinary differential equations), recently \citet{Toda2021ORL} applied it to solve a dynamic programming problem with state-dependent discounting. The average growth rate condition \eqref{eq:weight_asmp2} corresponds to the spectral condition $\rho(B)<1$. In recent years, the importance of the spectral radius for solving dynamic programming problems has been recognized by several authors \citep{Toda2019JME,BorovickaStachurski2020,MaStachurskiToda2020JET}.

To illustrate the usefulness of the Perov contraction theorem coupled with weighted supremum norm for unbounded Markov dynamic programming, I apply the main results to solve an optimal savings problem with unbounded utility. I show through an example that the maximum growth rate condition \eqref{eq:weight_asmp} is restrictive and that the average growth rate condition \eqref{eq:weight_asmp2} is not only sufficient but also almost necessary.

\section{Perov contraction theorem}\label{sec:Perov}

This section introduces some notation and explains the Perov contraction theorem. For $N\in \N$, the set $\R^N$ denotes the $N$-dimensional Euclidean space with a typical element denoted by $x=(x_1,\dots,x_N)$. The set
\begin{equation*}
	\R_+^N=\set{x=(x_n)\in \R^N:(\forall n)x_n\ge 0}
\end{equation*}
denotes the nonnegative orthant. For vectors $a=(a_1,\dots,a_N)\in \R^N$ and $b=(b_1,\dots,b_N)\in \R^N$, we write $a\le b$ if and only if $a_n\le b_n$ for all $n$, or equivalently $b-a\in \R_+^N$. The set $\R^{M\times N}$ denotes the set of of all $M\times N$ real matrices.

Let $\sV$ be a set, $N\in \N$, and $d:\sV\times \sV\to \R^N$. We say that $d$ is a \emph{vector-valued metric} if the following conditions hold:
\begin{enumerate}
	\item\label{item:metric_nonnegative} (Nonnegativity) $d(v_1,v_2)\ge 0$, with equality if and only if $v_1=v_2$,
	\item\label{item:metric_symmetry} (Symmetry) $d(v_1,v_2)=d(v_2,v_1)$,
	\item\label{item:metric_triangle} (Triangle inequality) $d(v_1,v_3)\le d(v_1,v_2)+d(v_2,v_3)$.
\end{enumerate}
Note that in conditions \ref{item:metric_nonnegative} and \ref{item:metric_triangle}, inequalities are interpreted entry-wise. A set $\sV$ endowed with a vector-valued metric $d$ is called a \emph{vector-valued metric space}. Obviously, a metric space is a special case of a vector-valued metric space by setting $N=1$.

Let $\norm{\cdot}$ denote the supremum norm on $\R^N$, so $\norm{a}=\max_n\abs{a_n}$ for $a=(a_1,\dots,a_N)\in \R^N$. Note that the supremum norm satisfies the following monotonicity property: if $a,b\in \R^N$ and $0\le a\le b$, then
\begin{equation*}
	\norm{a}=\max_n a_n\le \max_n b_n=\norm{b}.
\end{equation*}
The monotonicity will be repeatedly used in the subsequent discussion. If $(\sV,d)$ is a vector-valued metric space and we define $\norm{d}:\sV\times \sV\to \R$ by
\begin{equation*}
	\norm{d}(v_1,v_2)=\norm{d(v_1,v_2)}=\max_n d_n(v_1,v_2),
\end{equation*}
then $(\sV,\norm{d})$ is a metric space in the usual sense. To see this, conditions \ref{item:metric_nonnegative} and \ref{item:metric_symmetry} are trivial, and condition \ref{item:metric_triangle} holds because
\begin{align*}
	\norm{d}(v_1,v_3)&=\norm{d(v_1,v_3)}\le \norm{d(v_1,v_2)+d(v_2,v_3)}\\
	&\le \norm{d(v_1,v_2)}+\norm{d(v_2,v_3)}=\norm{d}(v_1,v_2)+\norm{d}(v_2,v_3),
\end{align*}
where the first inequality uses condition \ref{item:metric_triangle} for $d$ and the monotonicity of the supremum norm $\norm{\cdot}$. We say that the vector-valued metric space $(\sV,d)$ is \emph{complete} if the metric space $(\sV,\norm{d})$ is complete.

Below, let $\norm{\cdot}$ also denote the operator norm for $N\times N$ matrices induced by the supremum norm, that is, $\norm{A}=\sup_{\norm{x}=1}\norm{Ax}$ for $A\in \R^{N\times N}$. Recall that for a square matrix $A$, the spectral radius $\rho(A)$ is defined by the largest absolute value of all eigenvalues:
\begin{equation*}
	\rho(A)\coloneqq \max_n \set{\abs{\alpha_n}:\text{$\alpha_n$ is an eigenvalue of $A$}}.
\end{equation*}
For any matrix norm, the Gelfand spectral radius formula
\begin{equation}
	\rho(A)=\lim_{k\to\infty} \norm{A^k}^{1/k} \label{eq:Gelfand}
\end{equation}
holds \citep[Corollary 5.6.14]{HornJohnson2013}. 

We extend the notion of contractions as follows. Let $(\sV,d)$ be a vector-valued metric space. We say that a self map $T:\sV\to \sV$ is a \emph{Perov contraction} with coefficient matrix $B\ge 0$ if $\rho(B)<1$ and
\begin{equation}
	d(Tv_1,Tv_2)\le Bd(v_1,v_2) \label{eq:metric_Perov}
\end{equation}
for all $v_1,v_2\in \sV$. Here $B\ge 0$ means that the matrix $B=(b_{mn})$ is nonnegative: $b_{mn}\ge 0$ for all $m,n$. When $T$ is a Perov contraction, by iterating \eqref{eq:metric_Perov}, for every $k\in \N$ we have
\begin{equation*}
	d(T^kv_1,T^kv_2)\le B^kd(v_1,v_2).
\end{equation*}
Taking the supremum norm of both sides, we obtain
\begin{equation*}
	\norm{d}(T^kv_1,T^kv_2)\le \norm{B^k}\norm{d}(v_1,v_2).
\end{equation*}
Noting the Gelfand spectral radius formula \eqref{eq:Gelfand}, it follows that $T^k$ is a contraction if $k$ is large enough. Thus a Perov contraction is nothing but an eventual contraction ($k$-stage contraction for some $k\in \N$). The following fixed point theorem is therefore not surprising.

\begin{thm}[Perov contraction theorem \citep{Perov1964}]\label{thm:metric_Perov}
	Let $(\sV,d)$ be a complete vector-valued metric space and $T:\sV\to \sV$ be a Perov contraction with coefficient matrix $B\ge 0$. Then
	\begin{enumerate}
		\item $T$ has a unique fixed point $v^*\in \sV$,
		\item for any $v_0\in \sV$, we have $v^*=\lim_{k\to\infty}T^kv_0$, and
		\item for any $\beta\in (\rho(B),1)$, the approximation error $d(T^kv_0,v^*)$ has order of magnitude $\beta^k$.
	\end{enumerate}
\end{thm}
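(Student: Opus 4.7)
The plan is to reduce the statement to the classical Banach contraction theorem applied to a sufficiently high iterate $T^K$, exploiting the fact already observed in the excerpt: by the Gelfand formula \eqref{eq:Gelfand}, the scalar-norm contraction constants $\norm{B^k}$ decay geometrically.

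First I would fix any $\beta\in(\rho(B),1)$. Since $\norm{B^k}^{1/k}\to\rho(B)<\beta$, there is a $K\in\N$ with $\norm{B^K}<1$ and, more generally, $\norm{B^k}\le C\beta^k$ for all $k$ and some constant $C>0$. Iterating \eqref{eq:metric_Perov} gives the vector inequality $d(T^k v_1,T^k v_2)\le B^k d(v_1,v_2)$, and passing to the supremum norm with the monotonicity property established in the excerpt yields
\begin{equation*}
  \norm{d}(T^k v_1,T^k v_2)\le \norm{B^k}\norm{d}(v_1,v_2).
\end{equation*}
Hence $T^K\colon \sV\to\sV$ is a Banach contraction on the complete metric space $(\sV,\norm{d})$.

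The classical Banach contraction theorem then supplies a unique fixed point $v^*$ of $T^K$ together with $T^{nK} v_0\to v^*$ for every $v_0\in\sV$. To transfer this to $T$ itself, I would note that $T v^* = T(T^K v^*) = T^K(T v^*)$, so $T v^*$ is also a fixed point of $T^K$; by uniqueness, $T v^*=v^*$. Any fixed point of $T$ is a fortiori a fixed point of $T^K$, so it must equal $v^*$, proving part (i). For part (ii), write an arbitrary $k$ as $k=nK+r$ with $0\le r<K$; applying the Banach result to the initial point $T^r v_0$ gives $T^k v_0 = T^{nK}(T^r v_0)\to v^*$ as $k\to\infty$.

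For part (iii), I would not reuse the Banach rate but instead exploit the vector inequality directly: since $v^*$ is a fixed point of $T$,
\begin{equation*}
  d(T^k v_0,v^*) = d(T^k v_0,T^k v^*)\le B^k d(v_0,v^*),
\end{equation*}
so $\norm{d}(T^k v_0,v^*)\le \norm{B^k}\norm{d}(v_0,v^*)\le C\beta^k\norm{d}(v_0,v^*)$, giving the claimed order of magnitude. The only subtle point in the whole argument is the passage from the $k$-step contraction to a statement about $T$; everything else is a direct translation of the Banach proof, aided by the Gelfand formula, so I do not anticipate a significant obstacle.
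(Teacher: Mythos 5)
Your argument is correct and follows exactly the route the paper itself sets up: it uses the monotonicity of the supremum norm and the Gelfand formula to show that some iterate $T^K$ is a Banach contraction on $(\sV,\norm{d})$, then transfers the fixed point back to $T$ and reads off the $\beta^k$ rate from $\norm{B^k}\le C\beta^k$. The paper only sketches this (deferring details to the cited reference) after observing that a Perov contraction is an eventual contraction, so your proposal is essentially a fleshed-out version of the paper's own argument.
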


\begin{proof}
The proof is nearly identical to that of the classical contraction mapping theorem except that the monotonicity of the supremum norm and the Gelfand spectral radius formula play important roles. See \cite{Toda2021ORL} for details.
\end{proof}

The following proposition generalizes \citet{Blackwell1965}'s sufficient condition to Perov contractions.

\begin{prop}\label{prop:metric_PerovBlackwell}
Let $X$ be a set and $\sV$ be a space of functions $v:X\to \R^N$ with the following properties:
\begin{enumerate}[(a)]
	\item (Upward shift) For $v\in \sV$ and $c\in \R_+^N$, we have $v+c\in \sV$.
	\item (Bounded difference) For all $u,v\in \sV$ and $n$, we have
	\begin{equation*}
		d_n(u,v)\coloneqq \sup_{x\in X}\abs{u_n(x)-v_n(x)}<\infty.
	\end{equation*}
\end{enumerate}
Let $d=(d_1,\dots,d_N)$. Suppose that $(\sV,d)$ is a complete vector-valued metric space and $T:\sV \to \sV$ satisfies
\begin{enumerate}
	\item (Monotonicity) $u\le v$ implies $Tu\le Tv$,
	\item (Discounting) there exists a nonnegative matrix $B\in \R_+^{N\times N}$ with $\rho(B)<1$ such that, for all $v\in \sV$ and $c\in \R_+^N$, we have $T(v+c)\le Tv+Bc$.
\end{enumerate}
Then $T$ is a Perov contraction with coefficient matrix $B$.
\end{prop}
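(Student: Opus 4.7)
The plan is to mimic Blackwell's classical scalar argument, but vector-valued, using $c = d(u,v) \in \R_+^N$ as the natural vector-valued ``gap'' between $u$ and $v$. The key observation is that $d(u,v)$ is an element of $\R_+^N$, so it is an admissible shift for the upward-shift property (a) and the discounting hypothesis.

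First I would fix arbitrary $u,v \in \sV$ and set $c \coloneqq d(u,v) \in \R_+^N$, which is well-defined and finite by hypothesis (b). The definition of $d_n$ gives $\lvert u_n(x)-v_n(x)\rvert \le c_n$ for every $x \in X$ and every $n$, so in particular $u(x) \le v(x)+c$ componentwise for all $x$, i.e., $u \le v+c$ in the pointwise order. By the upward-shift property, $v+c \in \sV$, so I can apply monotonicity to obtain $Tu \le T(v+c)$, and then the discounting hypothesis to obtain $T(v+c) \le Tv + Bc$. Chaining these gives $Tu - Tv \le Bc = B\,d(u,v)$. Swapping the roles of $u$ and $v$ and using the symmetry $d(v,u)=d(u,v)$ yields $Tv - Tu \le B\,d(u,v)$ as well.

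Taking component $m$ of both inequalities and using $B \ge 0$ (so that $Bc \in \R_+^N$), I get $\lvert (Tu)_m(x)-(Tv)_m(x)\rvert \le (B\,d(u,v))_m$ for every $x$. Taking the supremum over $x$ on the left gives $d_m(Tu,Tv) \le (B\,d(u,v))_m$ for each $m$, which is the componentwise inequality $d(Tu,Tv) \le B\,d(u,v)$. Combined with $\rho(B)<1$, this is exactly the definition of a Perov contraction with coefficient matrix $B$.

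There is no real obstacle: the only point that requires any care is recognizing that the correct vectorial analogue of Blackwell's scalar slack $\lVert u-v\rVert_\infty$ is the vector $d(u,v) \in \R_+^N$ itself, so that one can feed it into the discounting condition and recover the matrix $B$ on the right-hand side without losing tightness. Everything else (monotonicity, symmetry, taking suprema) is routine.
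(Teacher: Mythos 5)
Your proposal is correct and follows essentially the same argument as the paper: both take $c=d(u,v)$ as the vector-valued slack, apply the upward-shift property, monotonicity, and discounting to get $Tu-Tv\le Bc$, symmetrize, and take suprema componentwise. No differences worth noting.
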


\begin{proof}
Take any $v_1,v_2\in \sV$ and let $c=d(v_1,v_2)\in \R_+^N$. For any $x\in X$, we have
\begin{equation*}
	v_1(x)=v_1(x)-v_2(x)+v_2(x)\le v_2(x)+c,
\end{equation*}
so $v_1\le v_2+c\in \sV$ by the upward shift property. Using monotonicity and discounting, we obtain
\begin{equation*}
	Tv_1\le T(v_2+c)\le Tv_2+Bc\implies Tv_1-Tv_2\le Bc.
\end{equation*}
Interchanging the role of $v_1,v_2$, we obtain $Tv_2-Tv_1\le Bc$. Combining these two inequalities, for every $m=1,\dots,N$ and $x\in X$, we have
\begin{equation*}
	\abs{(Tv_1)_m(x)-(Tv_2)_m(x)}\le (Bd(v_1,v_2))_m.
\end{equation*}
Taking the supremum over $x\in X$ and using the definition of the vector-valued metric $d$, we obtain $d(Tv_1,Tv_2)\le Bd(v_1,v_2)$, so $T$ is a Perov contraction with coefficient matrix $B$.
\end{proof}

\begin{rem*}
A version of Proposition \ref{prop:metric_PerovBlackwell} appears in \citep[Theorem 3]{Toda2021ORL}. 
Proposition A.4 of \cite{StachurskiZhang2021} and Proposition 6.3.3 of \cite{SargentStachurskiDPbook} obtain a similar result in the context of eventual contractions.
\end{rem*}

\section{Unbounded Markov dynamic programming}

In this section, we apply the Perov contraction theorem within the weighted supremum norm framework to solve unbounded Markov dynamic programming problems.

\subsection{Abstract dynamic program}

We first introduce the notion of an abstract dynamic program following \cite{Denardo1967,Bertsekas2018}. A \emph{dynamic program} is a tuple $\cD=\set{\sX,\sA,\Gamma,\sV,H}$, where
\begin{itemize}
	\item $\sX$ is a nonempty set called the \emph{state space},
	\item $\sA$ is a nonempty set called the \emph{action space},
	\item $\Gamma:\sX\twoheadrightarrow \sA$ is a nonempty correspondence called the \emph{feasible correspondence}, with its graph denoted by $\sG\coloneqq \set{(x,a)\in \sX\times \sA: a\in \Gamma(x)}$,
	\item $\sV$ is a nonempty space of functions $v:\sX\to [-\infty,\infty]$ called the \emph{value space},
	\item $H:\sG\times \sV\to [-\infty,\infty]$ is a function called the \emph{aggregator}, which is increasing in the last argument:
	\begin{equation*}
		v_1\le v_2\implies H(x,a,v_1)\le H(x,a,v_2).
	\end{equation*}
\end{itemize}

Given a dynamic program $\cD=\set{\sX,\sA,\Gamma,\sV,H}$, the Bellman operator $T$ on the value space $\sV$ is defined by
\begin{equation}
	(Tv)(x)\coloneqq \sup_{a\in \Gamma(x)}H(x,a,v), \label{eq:dpintro_T}
\end{equation}
where $v\in \sV$ and $x\in \sX$. We say that $v\in \sV$ is a \emph{value function} if $v$ is a fixed point of the Bellman operator $T$, that is, $Tv=v$. In what follows, we introduce the following additional structure, which we refer to as an \emph{additive Markov dynamic program}.
\begin{itemize}
	\item The state space can be written as $\sX\times\sZ$, where $\sZ=\set{1,\dots,Z}$ is a finite set associated with a stochastic matrix $P=(P(z,z'))_{z,z'\in \sZ}$.\footnote{We say $P=(P(z,z'))_{z,z'\in \sZ}$ is a stochastic matrix if $P(z,z')\ge 0$ and $\sum_{z'\in \sZ}P(z,z')=1$.}
	\item The aggregator takes the additive (expected utility) form
	\begin{equation}
		H(x,z,a,v)=r(x,z,a)+\sum_{z'=1}^ZP(z,z')\beta(z,z')v(g(x,z,z',a),z'), \label{eq:contract_HMDP}
	\end{equation}
	where $r:\sX\times \sZ\times \sA\to [-\infty,\infty)$ is the \emph{reward function}, $g:\sX\times \sZ^2\times \sA \to \sX$ is the \emph{law of motion} or \emph{transition function}, and $\beta(z,z')\ge 0$ is the \emph{discount factor} conditional on transitioning from state $z$ to $z'$.
\end{itemize}
Some remarks are in order. First, the additivity of the aggregator $H$ in $v$ implies that we focus on expected utility models. Such a restriction is not essential but simplifies the subsequent discussion. For some approaches that do not require additivity, see for example \cite{Duran2000,Duran2003}. Second, we allow the discount factor $\beta(z,z')$ to be state dependent. Obviously, the classical setting in which the discount factor $\beta(z,z')=\beta$ is constant is a special case. State-dependent discounting causes no theoretical difficulty but has been shown to be an important economic feature; see for instance \cite{StachurskiZhang2021,Toda2021ORL} and the references therein. Third, for simplicity we suppose that the uncertainty is driven by an exogenous finite-state Markov chain. The generalization to infinite state spaces should be possible by applying generalizations of the Perov contraction theorem reviewed in \cite{Zabreiko1997}.

Note that in the definition of the aggregator \eqref{eq:contract_HMDP}, the summation can be interpreted as the conditional expectation
\begin{equation*}
	\E[\beta(z_t,z_{t+1})v(x_{t+1},z_{t+1}) \mid z_t=z],
\end{equation*}
where the next state is
\begin{equation*}
	x_{t+1}=g(x_t,z_t,z_{t+1},a_t).
\end{equation*}
Thus we may rewrite the Bellman operator $T$ in \eqref{eq:dpintro_T} as
\begin{align}
	(Tv)(x,z)&\coloneqq \sup_{a\in \Gamma(x,z)} H(x,z,a,v)\notag \\
	&=\sup_{a\in \Gamma(x,z)}\set{r(x,z,a)+\E_z[\beta(z,z')v(x',z')]}, \label{eq:contract_TMDP}
\end{align}
where $\E_z=\E[\cdot \mid z]$ denotes the conditional expectation and it is understood that $x'=g(x,z,z',a)$. We write an additive Markov dynamic program as
\begin{equation}
	\cD=\set{\sX,\sZ,P,\sA,\Gamma,\sV,r,g,\beta}. \label{eq:contract_additiveMDP}
\end{equation}

\subsection{Weighted supremum norm approach}

Let $\cD$ be an additive Markov dynamic program in \eqref{eq:contract_additiveMDP}. If the reward function $r$ is bounded and the discount factor $\beta(z,z')$ is less than 1, it is well known that the Bellman operator $T$ in \eqref{eq:contract_TMDP} is a contraction on the space of bounded functions equipped with the supremum norm and hence a unique fixed point exists. However, in many common applications, the reward function is unbounded. One way to get around this issue is to use a weighted supremum norm instead of the (unweighted) supremum norm as proposed by \cite{Lippman1975,Wessels1977}.

We start the discussion with the Bellman equation
\begin{equation}
	v(x,z)=\sup_{a\in \Gamma(x,z)}\set{r(x,z,a)+\E_z[\beta(z,z')v(x',z')]}, \label{eq:contract_bellmanMDP}
\end{equation}
which corresponds to setting $Tv=v$ in \eqref{eq:contract_TMDP}. Let $\kappa:\sX\times \sZ\to (0,\infty)$ be some positive function and suppose that we normalize the value function as $\tilde{v}=v/\kappa$. Then the Bellman equation \eqref{eq:contract_bellmanMDP} becomes
\begin{equation*}
	\kappa(x,z)\tilde{v}(x,z)=\sup_{a\in \Gamma(x,z)}\set{r(x,z,a)+\E_z[\beta(z,z')\kappa(x',z')\tilde{v}(x',z')]}.
\end{equation*}
Dividing both sides by $\kappa(x,z)>0$, we may define the scaled Bellman operator $\tilde{T}$ by
\begin{equation}
	(\tilde{T}\tilde{v})(x,z)=\sup_{a\in \Gamma(x,z)}\set{\tilde{r}(x,z,a)+\E_z\left[\beta(z,z')\frac{\kappa(x',z')}{\kappa(x,z)}\tilde{v}(x',z')\right]}, \label{eq:contract_bellmanweight}
\end{equation}
where $\tilde{r}\coloneqq r/\kappa$. To make $\tilde{T}$ a (Perov) contraction, all we need is to control the ratio $\kappa(x',z')/\kappa(x,z)$. We thus define
\begin{equation}
	\tilde{\beta}(z,z')\coloneqq \beta(z,z')\sup_{x\in \sX}\sup_{a\in \Gamma(x,z)}\frac{\kappa(g(x,z,z',a),z')}{\kappa(x,z)}. \label{eq:contract_gamma}
\end{equation}
To come up with the appropriate function space, let $\sV$ be the space of functions $v:\sX\times \sZ\to \R$ with
\begin{equation}
	\norm{v}_\kappa \coloneqq \sup_{z\in \sZ}\sup_{x\in \sX}\frac{\abs{v(x,z)}}{\kappa(x,z)}<\infty. \label{eq:contract_psinorm}
\end{equation}
Because $\kappa>0$, it is straightforward to show that $(\sV,\norm{\cdot}_\kappa)$ is a Banach space. The norm \eqref{eq:contract_psinorm} is called the \emph{weighted supremum norm} with weight function $\kappa$. For $v_1,v_2\in \sV$, if we define $d:\sV\times \sV \to \R_+^Z$ by
\begin{equation*}
	d_z(v_1,v_2)=\sup_{x\in \sX} \frac{\abs{v_1(x,z)-v_2(x,z)}}{\kappa(x,z)},
\end{equation*}
then $(\sV,d)$ becomes a complete vector-valued metric space by the discussion in Section \ref{sec:Perov}. In what follows, it is convenient to define the space $(b\sX)^Z$ of functions $f=(f_1,\dots,f_Z):\sX\to \R^Z$, where each $f_z$ is bounded on $\sX$. Obviously, $(b\sX)^Z$ is a complete vector-valued metric space by considering the sup distance for each entry.

With this preparation, we obtain the following theorem, which is the main result of this paper.

\begin{thm}\label{thm:contract_weight}
Let $\cD$ in \eqref{eq:contract_additiveMDP} be an additive Markov dynamic program associated with a weight function $\kappa:\sX\times \sZ\to (0,\infty)$. Let $(\sV,d)$ be the complete vector-valued metric space just described. Suppose that
\begin{equation}
	\sup_{x\in \sX}\sup_{a\in \Gamma(x,z)}\frac{\abs{r(x,z,a)}}{\kappa(x,z)}<\infty
\end{equation}
and $\rho(B)<1$, where the nonnegative matrix $B\coloneqq (P(z,z')\tilde{\beta}(z,z'))$ is defined using \eqref{eq:contract_gamma}. Then the following statements are true.
\begin{enumerate}
	\item\label{item:contract_weight1} The (scaled) Bellman operator $T$ ($\tilde{T}$) is a Perov contraction on $\sV$ ($(b\sX)^Z$) with coefficient matrix $B$.
	\item\label{item:contract_weight2} $\cD$ has a unique value function $v=\kappa \tilde{v}$ in $\sV$, where $\tilde{v}$ is the unique fixed point of $\tilde{T}$ in $(b\sX)^Z$.
\end{enumerate}
\end{thm}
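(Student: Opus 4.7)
The plan is to apply Proposition \ref{prop:metric_PerovBlackwell} to the scaled Bellman operator $\tilde{T}$ on the complete vector-valued metric space $(b\sX)^Z$, and then transfer the conclusion back to $T$ on $\sV$ via the bijection $\Phi:v\mapsto v/\kappa$.

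First I would verify that $\tilde{T}$ maps $(b\sX)^Z$ into itself. Applying the triangle inequality inside the outer supremum in \eqref{eq:contract_bellmanweight} and using nonnegativity of the transition kernel, one obtains
\begin{equation*}
	\abs{(\tilde{T}\tilde{v})(x,z)} \le \sup_{a\in\Gamma(x,z)} \frac{\abs{r(x,z,a)}}{\kappa(x,z)} + \sum_{z'\in\sZ} P(z,z')\tilde{\beta}(z,z') \sup_{x'\in \sX} \abs{\tilde{v}(x',z')}.
\end{equation*}
The first term is finite by the reward hypothesis, and the second is finite since each component of $\tilde{v}$ is bounded and $\rho(B)<1$ forces $\tilde{\beta}(z,z')<\infty$ whenever $P(z,z')>0$. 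The reward hypothesis also rules out $r=-\infty$ on $\sG$, so the inner supremum is well defined. Thus $\tilde{T}\tilde{v}\in (b\sX)^Z$.

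Next I would verify the two Blackwell-style hypotheses of Proposition \ref{prop:metric_PerovBlackwell} with coefficient matrix $B$. Monotonicity is immediate since the kernel $\beta(z,z')\kappa(g(x,z,z',a),z')/\kappa(x,z)$ is nonnegative, so componentwise order is preserved by the integrand, by $\E_z$, and by $\sup_a$. For the discounting condition, given $\tilde{v}\in(b\sX)^Z$ and $c\in\R_+^Z$, substituting $\tilde{v}+c$ into \eqref{eq:contract_bellmanweight} adds inside the braces the nonnegative term $\sum_{z'}P(z,z')\beta(z,z')\kappa(g(x,z,z',a),z')c_{z'}/\kappa(x,z)$. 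Applying the subadditivity $\sup_a[f(a)+h(a)]\le\sup_a f(a)+\sup_a h(a)$ and then the definition \eqref{eq:contract_gamma} of $\tilde{\beta}$ term by term in $z'$, the total shift is bounded by $\sum_{z'}P(z,z')\tilde{\beta}(z,z')c_{z'}=(Bc)_z$, uniformly in $x$. Hence $\tilde{T}(\tilde{v}+c)\le \tilde{T}\tilde{v}+Bc$.

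With both hypotheses verified and $\rho(B)<1$ by assumption, Proposition \ref{prop:metric_PerovBlackwell} yields that $\tilde{T}$ is a Perov contraction on $(b\sX)^Z$ with coefficient matrix $B$, and Theorem \ref{thm:metric_Perov} produces a unique fixed point $\tilde{v}^*\in(b\sX)^Z$. The vector-valued metric $d$ on $\sV$ is defined precisely so that $\Phi$ is a bijection identifying the two metrics entry by entry, and the derivation leading to \eqref{eq:contract_bellmanweight} amounts to the conjugacy $\tilde{T}\Phi=\Phi T$. Therefore $T$ is itself a Perov contraction on $\sV$ with coefficient matrix $B$ and $v^*\coloneqq\kappa\tilde{v}^*$ is its unique fixed point in $\sV$, establishing parts \ref{item:contract_weight1} and \ref{item:contract_weight2}. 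I expect the only delicate step to be the discounting estimate, where subadditivity of $\sup_a$ must be applied before invoking the definition of $\tilde{\beta}$ so that the $a$-dependence in the ratio $\kappa(g(x,z,z',a),z')/\kappa(x,z)$ can be absorbed uniformly in $x$.
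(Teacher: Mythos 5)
Your proposal is correct and follows essentially the same route as the paper: verify monotonicity and the discounting bound $\tilde{T}(\tilde{v}+c)\le \tilde{T}\tilde{v}+Bc$ via Proposition \ref{prop:metric_PerovBlackwell} for the scaled operator $\tilde{T}$ on $(b\sX)^Z$, then transfer the fixed point back to $T$ on $\sV$ through the conjugacy $v\mapsto v/\kappa$. Your explicit check that $\tilde{T}$ maps $(b\sX)^Z$ into itself is a small completeness improvement over the paper's ``it is clear,'' but not a different argument.
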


\begin{proof}
\ref{item:contract_weight1} It suffices to show the claim for $\tilde{T}$. We verify the assumptions of Proposition \ref{prop:metric_PerovBlackwell}. It is clear that $(b\sX)^Z$ satisfies the upward shift and bounded difference properties. The monotonicity of $\tilde{T}$ immediately follows from the definition \eqref{eq:contract_bellmanweight}. To show discounting, take any $c\in \R_+^Z$. Using \eqref{eq:contract_bellmanweight} and \eqref{eq:contract_gamma}, we obtain
\begin{align*}
	&(\tilde{T}(\tilde{v}+c))(x,z)\\
	&=\sup_{a\in \Gamma(x,z)}\set{\tilde{r}(x,z,a)+\E_z\left[\beta(z,z')\frac{\kappa(x',z')}{\kappa(x,z)}(\tilde{v}(x',z')+c(z'))\right]}\\
	&\le (\tilde{T}\tilde{v})(x,z)+\sup_{a\in \Gamma(x,z)}\E_z\left[\beta(z,z')\frac{\kappa(x',z')}{\kappa(x,z)}c(z')\right]\\
	&\le (\tilde{T}\tilde{v})(x,z)+\E_z[\tilde{\beta}(z,z')c(z')]\\
	&=(\tilde{T}\tilde{v})(x,z)+(Bc)_z.
\end{align*}
Therefore $\tilde{T}(\tilde{v}+c)\le \tilde{T}\tilde{v}+Bc$, so discounting holds.
	
\ref{item:contract_weight2} Obvious by \eqref{eq:contract_bellmanMDP} and \eqref{eq:contract_bellmanweight}.
\end{proof}

\subsection{Discussion}

Although the proof of Theorem \ref{thm:contract_weight} is a straightforward application of the Perov contraction theorem \ref{thm:metric_Perov} and the Blackwell-type sufficient condition (Proposition \ref{prop:metric_PerovBlackwell}), the value of Theorem \ref{thm:contract_weight} relative to existing results is the weakness of the assumption and the simplicity of the argument. Regarding the assumption, the existing literature typically assumes that the quantity $\tilde{\beta}(z,z')$ in \eqref{eq:contract_gamma} (or $\sum_{z'=1}^ZP(z,z')\tilde{\beta}(z,z')$) is uniformly bounded above by 1 in order to apply the contraction mapping theorem directly; for a textbook treatment, see for instance \citet[Assumption 8.3.2(b)]{Hernandez-LermaLasserre1999}.\footnote{Similar assumptions appear in \cite[Assumption 1]{Lippman1975}, \cite[Assumption (2)]{Wessels1977}, \cite[Assumption (A2)]{Duran2000},  \cite[Assumption 4]{Duran2003}, \cite[Assumption 12.2.14]{Stachurski2009}, \cite[Assumption 2.1.2]{Bertsekas2018}, and \cite[Assumption 5.1]{MaStachurskiToda2022JME}, among others.} Such uniform boundedness assumption is sufficient but not necessary for the contraction argument. Furthermore, as we shall see in the example below, such an assumption is too restrictive for applications.

Regarding the simplicity of the argument, as discussed before, because a Perov contraction is nothing but an eventual contraction, no new mathematical results are necessary to obtain Theorem \ref{thm:contract_weight}. In fact, several authors directly prove that the operator is an eventual contraction. See, for example, \cite[Lemma B.5]{MaStachurskiToda2020JET}. However, a significant advantage of the Perov contraction approach is that the proofs become very clear.

\section{Application: optimal savings}

To illustrate the power of Theorem \ref{thm:contract_weight} as well as the limitation of existing results, we consider the following optimal savings problem:
\begin{subequations}\label{eq:contract_os}
\begin{align}
	&\maximize && \E_0\sum_{t=0}^\infty \beta^t u(c_t) \label{eq:contract_osobj}\\
	&\st && (\forall t) w_{t+1}=R(z_t,z_{t+1})(w_t-c_t)+y(z_{t+1}) \label{eq:contract_osbudget}\\
	& && (\forall t) 0\le c_t\le w_t, \label{eq:contract_osborrow}\\
	&&& \text{$w_0>0$, $z_0$ given.} \label{eq:contract_osinitial}
\end{align}
\end{subequations}
Here $u:\R_+\to [-\infty,\infty)$ is the flow utility function from consumption $c_t\ge 0$ at time $t$; the parameter $\beta\in [0,1)$ is the discount factor; $\E_t$ denotes the expectation conditional on time $t$ information; $w_t\ge 0$ is the financial wealth at the beginning of time $t$; $\set{z_t}_{t=0}^\infty$ is a Markov chain taking values in the finite set $\sZ=\set{1,\dots,Z}$ with transition probability matrix $P=(P(z,z'))$; $y:\sZ\to \R_+$ specifies the non-financial income of the agent in each state $z\in \sZ$; and $R:\sZ^2\to \R_+$ specifies the gross return\index{gross return} on savings conditional on transitioning from state $z$ to $z'$. The expression \eqref{eq:contract_osobj} is the objective function; the condition \eqref{eq:contract_osbudget} is the budget constraint; the condition \eqref{eq:contract_osborrow} implies that consumption is nonnegative and the agent cannot borrow; and \eqref{eq:contract_osinitial} is the initial condition. To understand the budget constraint \eqref{eq:contract_osbudget}, note that the next period's financial wealth $w'$ is the sum of the next period's non-financial income $y'$ and the return from savings, which is $R$ times $w-c$.

The optimal savings problem \eqref{eq:contract_os} is an important building block of many economic models and has been studied under various specifications and assumptions. While \cite{Schechtman1976,ChamberlainWilson2000} assume a bounded utility function and apply the contraction mapping theorem to the Bellman equation, \cite{LiStachurski2014,MaStachurskiToda2020JET,MaToda2021JET,MaToda2022JME} do away with boundedness and apply a contraction argument to the Euler equation (first-order optimality condition).

Here we solve the optimal savings problem when $u$ could be unbounded by applying the Perov contraction theorem and the weighted supremum norm. Because constant discounting is inessential, let $\beta(z,z')\ge 0$ be the discount factor conditional on transitioning from state $z$ to $z'$. Then the Bellman equation becomes
\begin{equation}
	v(w,z)=\sup_{0\le c\le w}\set{u(c)+\E_z[\beta(z,z')v(w',z')]}, \label{eq:os_bellman}
\end{equation}
where the next period's wealth is
\begin{equation*}
	w'=g(w,z,z',c)\coloneqq R(z,z')(w-c)+y(z').
\end{equation*}

We impose the following assumption.

\begin{asmp}\label{asmp:U}
The utility function $u:[0,\infty)\to \R$ is increasing, concave, and bounded below.
\end{asmp}

A typical example satisfying Assumption \ref{asmp:U} is the constant relative risk aversion (CRRA) specification
\begin{equation}
	u(c)=\frac{c^{1-\gamma}}{1-\gamma} \label{eq:contract_CRRA}
\end{equation}
with $0<\gamma<1$. Note that $u$ in \eqref{eq:contract_CRRA} is unbounded above.

The following proposition shows the existence and uniqueness of a value function.

\begin{prop}\label{prop:os}
Consider the optimal savings problem \eqref{eq:contract_os} and suppose Assumption \ref{asmp:U} holds. Define the matrix $B\in \R_+^{Z\times Z}$ by
\begin{equation}
	B(z,z')=P(z,z')\beta(z,z')\max\set{1,R(z,z')}. \label{eq:Bzz'}
\end{equation}
If $\rho(B)<1$, then the Bellman equation \eqref{eq:os_bellman} has a unique value function satisfying
\begin{equation*}
	\sup_{z\in \sZ}\sup_{w\ge 0}\frac{\abs{v(w,z)}}{w+1}<\infty.
\end{equation*}
\end{prop}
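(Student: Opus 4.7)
The plan is to apply Theorem \ref{thm:contract_weight} to the additive Markov dynamic program with $\sX = \R_+$, $\Gamma(w,z) = [0,w]$, reward $r(w,z,c) = u(c)$, and transition $g(w,z,z',c) = R(z,z')(w-c) + y(z')$. The obvious candidate weight is $\kappa(w,z) = w+1$, but computing the coefficient matrix directly with this weight produces an expression that involves the non-financial income $y(z')$, whereas the matrix $B$ in \eqref{eq:Bzz'} involves only $\max\{1,R(z,z')\}$. To reconcile this, I would instead use the shifted weight $\kappa_a(w,z) = w+a$ for a parameter $a>0$ and let $a$ grow; the weighted supremum norm induced by $\kappa_a$ is equivalent to the one induced by $w+1$, so existence and uniqueness in the former space is precisely what the proposition asks for.

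First I would verify the reward bound $\sup_{w,c}|u(c)|/(w+a)<\infty$. Since $u$ is concave on $[0,\infty)$, it admits a finite supergradient at $c=1$, yielding a linear bound $u(c)\le A_1+A_2 c$; combined with the lower bound from Assumption~\ref{asmp:U} and the constraint $c\le w$, this gives $|u(c)|\le C(w+a)$ for a suitable constant $C$, as required.

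Next I would compute the coefficient matrix under the weight $\kappa_a$. Because $R(z,z')\ge 0$, the inner supremum over $c\in[0,w]$ in \eqref{eq:contract_gamma} is attained at $c=0$, reducing the relevant ratio to $(R(z,z')w + y(z') + a)/(w+a)$. This ratio is monotone in $w$, and its supremum over $w\ge 0$ equals $\max\{R(z,z'),\,1+y(z')/a\}$, so that
\begin{equation*}
B_a(z,z') = P(z,z')\beta(z,z')\max\{R(z,z'),\,1+y(z')/a\}.
\end{equation*}
As $a\to\infty$, $B_a\to B$ entrywise. By continuity of the spectral radius, $\rho(B_a)\to\rho(B)<1$, so for all sufficiently large $a$ we have $\rho(B_a)<1$. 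Theorem \ref{thm:contract_weight} then delivers a unique fixed point of the Bellman operator in the weighted space; since this space coincides with $\{v:\sup_{z,w}|v(w,z)|/(w+1)<\infty\}$, the proposition follows.

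The main obstacle is precisely the mismatch between the coefficient matrix that emerges naturally from a fixed weight function and the cleaner matrix $B$ stated in the proposition. This is exactly what the shift $a\to\infty$ and the continuity of $\rho$ resolve; the remaining verifications are routine applications of concavity and the definition of $\tilde\beta$.
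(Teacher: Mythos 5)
Your proposal is correct and follows essentially the same route as the paper: the paper also uses the shifted weight $\kappa(w,z)=w+b$, bounds the ratio by $\max\{1,R(z,z')\}+y(z')/b$, and lets $b\to\infty$ to reduce the spectral condition to $\rho(B)<1$ before invoking Theorem \ref{thm:contract_weight}. Your version is if anything slightly more explicit, since you spell out the continuity of the spectral radius and the equivalence of the weighted norms for different shifts, points the paper leaves implicit.
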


\begin{proof}
Since $u$ is increasing and bounded below, we have $u(c)\ge u(0)>-\infty$. By redefining $u(c)$ as $u(c)-u(0)$ (which is a monotonic transformation that does not affect preference ordering) if necessary, without loss of generality we may assume $u\ge 0$. Since $u$ is increasing and concave, we can take $a,b>0$ such that $u(c)\le ac+b$ for all $c\ge 0$. By redefining $u(c)$ as $u(c)/a$ if necessary, without loss of generality we may assume $a=1$. Therefore $0\le u(c)\le c+b$, and clearly we can take arbitrarily large $b>0$.

Consider the weight function $\kappa(w,z)=w+b$, where $b>0$. For $0\le c\le w$, the normalized utility is
\begin{equation*}
	0\le \tilde{u}(w,c,z)\coloneqq \frac{u(c)}{\kappa(w,z)}\le \frac{u(w)}{w+b}\le 1,
\end{equation*}
which is bounded. Furthermore,
\begin{align}
	\frac{\kappa(g(w,z,z',c),z')}{\kappa(w,z)}&=\frac{R(z,z')(w-c)+y(z')+b}{w+b}\notag \\
	&\le \frac{R(z,z')w+y(z')+b}{w+b}. \label{eq:contract_osub}
\end{align}
Noting that
\begin{equation*}
	\frac{Rw+y+b}{w+b}\le\frac{\max\set{1,R}(w+b)+y}{w+b}\le \max\set{1,R}+\frac{y}{b},
\end{equation*}
it follows from \eqref{eq:contract_osub} that
\begin{equation*}
	\frac{\kappa(g(w,z,z',c),z')}{\kappa(w,z)}\le \max\set{1,R(z,z')}+\frac{y(z')}{b}\to \max\set{1,R(z,z')}
\end{equation*}
as $b\to\infty$. Therefore by taking $b$ large enough, a sufficient condition for the existence and uniqueness of a fixed point is that $\tilde{\beta}(z,z')\coloneqq \beta(z,z') \max\set{1,R(z,z')}$ satisfies the assumption of Theorem \ref{thm:contract_weight}, which is $\rho(B)<1$ for $B$ defined by \eqref{eq:Bzz'}.
\end{proof}

\begin{rem*}
Proposition \ref{prop:os} illustrates the limitation of the existing weighted supremum norm approach, which requires the condition
\begin{equation}
	\E_z[\beta(z,z')\max\set{1,R(z,z')}]=\sum_{z'=1}^ZP(z,z')\beta(z,z')\max\set{1,R(z,z')}<1 \label{eq:weight_cond}
\end{equation}
for all $z$ instead of $\rho(B)<1$. Since \eqref{eq:weight_cond} is equivalent to $B1\ll 1$ (where $1=(1,\dots,1)$ denotes the vector of ones), the well-known property of the spectral radius \citep[Theorem 8.1.22]{HornJohnson2013} shows that \eqref{eq:weight_cond} implies $\rho(B)<1$, so the former condition is stronger. For instance, suppose one period in the model corresponds to a year and the agent discounts future utility at 5\%. Then $\beta=0.95$. In order for \eqref{eq:weight_cond} to hold, the conditional expected gross return on wealth $\E_z[R(z,z')]$ cannot exceed 1.05, but the expected return on common assets such as stocks can easily exceed this value. In contrast, the condition $\rho(B)<1$ only requires that $\beta R<1$ in the long run average, not conditionally.
\end{rem*}

\begin{rem*}
In the setting of Proposition \ref{prop:os}, suppose that the utility function is given by \eqref{eq:contract_CRRA} with $0<\gamma<1$. If we consider the weight function $\kappa(w,z)=(w+b)^{1-\gamma}$, by a similar argument we may set
\begin{equation*}
	B(z,z')\coloneqq P(z,z')\beta(z,z') \max\set{1,R(z,z')^{1-\gamma}},
\end{equation*}
and satisfying the assumptions of Theorem \ref{thm:contract_weight} becomes even easier (because $R^{1-\gamma}<R$ whenever $R>1$).
\end{rem*}

In Theorem \ref{thm:contract_weight}, the condition $\rho(B)<1$ is sufficient for applying the Perov contraction theorem to solve an unbounded Markov dynamic program. The following proposition shows that this condition is almost necessary, which implies that it would be difficult to improve Theorem \ref{thm:contract_weight}.

\begin{prop}\label{prop:os_CRRA}
Consider the optimal savings problem \eqref{eq:contract_os} with CRRA utility function \eqref{eq:contract_CRRA} with $0<\gamma<1$ and zero non-financial income: $y(z)=0$ for all $z\in \sZ$. Define the matrix $B\in \R_+^{Z\times Z}$ by
\begin{equation}
	B(z,z')=P(z,z')\beta(z,z')R(z,z')^{1-\gamma}. \label{eq:Bzz'_CRRA}
\end{equation}
Then the following statements are true.
\begin{enumerate}
	\item\label{item:os_CRRA1} If $\rho(B)<1$, the Bellman equation \eqref{eq:os_bellman} has a unique value function satisfying
	\begin{equation*}
		\sup_{z\in \sZ}\sup_{w>0}\frac{\abs{v(w,z)}}{w^{1-\gamma}}<\infty.
	\end{equation*}
	\item\label{item:os_CRRA2} If $\rho(B)>1$, the optimal value of the problem \eqref{eq:contract_os} is $\infty$.
\end{enumerate}
\end{prop}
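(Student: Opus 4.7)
The plan is to exploit the scale invariance of CRRA utility in both directions. For part \ref{item:os_CRRA1}, this makes $\kappa(w,z) = w^{1-\gamma}$ the natural weight function, while for part \ref{item:os_CRRA2} it makes constant-consumption-to-wealth-ratio policies the natural test strategies.

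For part \ref{item:os_CRRA1}, I would apply Theorem \ref{thm:contract_weight} directly, taking the state space $\sX = (0,\infty)$ and $\kappa(w,z) = w^{1-\gamma}$ (the absorbing point $w=0$ is trivial because $u(0)=0$ forces $v(0,z)=0$). Two checks are needed. First, for $0 \le c \le w$,
\[
\frac{\abs{u(c)}}{\kappa(w,z)} = \frac{(c/w)^{1-\gamma}}{1-\gamma} \le \frac{1}{1-\gamma},
\]
so the reward is $\kappa$-bounded. Second, since $y \equiv 0$ gives $w' = R(z,z')(w-c)$,
\[
\frac{\kappa(w',z')}{\kappa(w,z)} = R(z,z')^{1-\gamma}\left(1-\frac{c}{w}\right)^{1-\gamma} \le R(z,z')^{1-\gamma},
\]
with equality at $c=0$. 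Hence $\tilde{\beta}(z,z') = \beta(z,z')R(z,z')^{1-\gamma}$ as defined in \eqref{eq:contract_gamma}, so the matrix in Theorem \ref{thm:contract_weight} coincides with $B$ in \eqref{eq:Bzz'_CRRA}. Since $\rho(B)<1$, the theorem yields a unique value function in the claimed weighted norm space.

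For part \ref{item:os_CRRA2}, I would exhibit a feasible policy whose expected utility diverges from a suitably chosen initial state. Consider the constant-consumption-ratio rule $c_t = s w_t$ for $s \in (0,1)$ to be fixed later. Iterating $w_{t+1} = (1-s)R(z_t,z_{t+1}) w_t$ gives
\[
w_t^{1-\gamma} = w_0^{1-\gamma}(1-s)^{t(1-\gamma)}\prod_{\tau=0}^{t-1} R(z_\tau,z_{\tau+1})^{1-\gamma},
\]
so the expected discounted utility at $(w_0, z_0)$ equals
\[
V = \frac{s^{1-\gamma} w_0^{1-\gamma}}{1-\gamma}\sum_{t=0}^\infty (1-s)^{t(1-\gamma)}\, (B^t \mathbf{1})_{z_0},
\]
where the identity $\E_{z_0}\prod_{\tau=0}^{t-1}\beta(z_\tau,z_{\tau+1})R(z_\tau,z_{\tau+1})^{1-\gamma} = (B^t\mathbf{1})_{z_0}$ (with $\mathbf{1}$ the vector of ones) follows by expanding the expectation as a sum over Markov paths and recognizing it as iterated multiplication by $B$. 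By the Perron--Frobenius theorem for nonnegative matrices, $B$ admits a nonzero nonnegative right eigenvector $v$ with $Bv=\rho(B)v$; picking $z_0$ with $v_{z_0}>0$ and using $v \le \norm{v}\mathbf{1}$ gives $(B^t \mathbf{1})_{z_0} \ge \rho(B)^t v_{z_0}/\norm{v}$. Since $\rho(B)>1$, one may choose $s>0$ so small that $(1-s)^{1-\gamma}\rho(B)>1$, whereupon the geometric series diverges and $V=\infty$, precluding a finite value function in the weighted norm space.

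The main obstacle is part (ii): one has to recognize that CRRA homogeneity makes the constant-consumption-ratio policy collapse the random wealth path into a scalar growth factor, rewrite the objective as a scalar series against $(B^t\mathbf{1})_{z_0}$, and convert the spectral condition $\rho(B)>1$ into genuine geometric growth at some initial state via Perron--Frobenius. Part (i), by contrast, is essentially bookkeeping once the weight $\kappa(w,z)=w^{1-\gamma}$ is chosen.
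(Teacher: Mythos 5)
Your part \ref{item:os_CRRA1} is essentially the paper's argument verbatim: the weight $\kappa(w,z)=w^{1-\gamma}$, the bound $\tilde{u}\le 1/(1-\gamma)$, the ratio bound $\kappa(w',z')/\kappa(w,z)\le R(z,z')^{1-\gamma}$, and an appeal to Theorem \ref{thm:contract_weight}. Your part \ref{item:os_CRRA2} is correct but takes a genuinely different route. The paper evaluates the finite-horizon plan ``save everything until $T$, consume all wealth at $T$,'' obtains $v_T(w,z)=(e_z'B^T\mathbf{1})\frac{w^{1-\gamma}}{1-\gamma}$ by induction, and invokes a Gelfand-type formula to conclude $v_T(w,z)^{1/T}\to\rho(B)$, so the values of this sequence of plans diverge. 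You instead exhibit a single stationary policy $c_t=sw_t$, use CRRA homogeneity to collapse the objective into the scalar series $\sum_t(1-s)^{t(1-\gamma)}(B^t\mathbf{1})_{z_0}$, and lower-bound $(B^t\mathbf{1})_{z_0}\ge \rho(B)^t v_{z_0}/\norm{v}$ via a Perron--Frobenius eigenvector, choosing $s$ so that $(1-s)^{1-\gamma}\rho(B)>1$. Each approach buys something: the paper's computation is shorter and needs no eigenvector, but its claim that $e_z'B^T\mathbf{1}$ ``can be regarded as a norm'' of $B^T$ is delicate when $B$ is reducible (the quantity can vanish or grow slower than $\rho(B)^T$ for some $z$); your argument is more elementary (no Gelfand formula) and is explicit that divergence is guaranteed only at initial states $z_0$ with $v_{z_0}>0$, which is the honest content of the statement in the reducible case. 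Do note that your conclusion is therefore established for some, not every, initial state, and that the interchange of expectation and infinite sum in your series should be justified by monotone convergence (all terms are nonnegative since $0<\gamma<1$); both points are easily added.
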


\begin{proof}
\ref{item:os_CRRA1} Consider the weight function $\kappa(w,z)=w^{1-\gamma}$. For $0\le c\le w$, the normalized utility is
\begin{equation*}
	0\le \tilde{u}(w,c,z)\coloneqq \frac{u(c)}{\kappa(w,z)}=\frac{1}{1-\gamma}(c/w)^{1-\gamma}\le \frac{1}{1-\gamma},
\end{equation*}
which is bounded. Furthermore,
\begin{equation*}
	\frac{\kappa(g(w,z,z',c),z')}{\kappa(w,z)}=\frac{(R(z,z')(w-c))^{1-\gamma}}{w^{1-\gamma}}\le R(z,z')^{1-\gamma},
\end{equation*}
with equality if $c=0$. Therefore if $B\in \R_+^{Z\times Z}$ defined by \eqref{eq:Bzz'_CRRA} satisfies $\rho(B)<1$, by Theorem \ref{thm:contract_weight} there exists a unique fixed point of the Bellman operator.

\ref{item:os_CRRA2} Consider the following feasible plan: the agent consumes zero (and saves everything) up to time $t=T$, consumes all wealth at $t=T$, and then consumes zero thereafter. Let $v_T(w,z)$ be the lifetime utility associated with this plan when the initial wealth and state are $w$ and $z$. Then clearly
\begin{equation}
	v_T(w,z)=\begin{cases*}
		\frac{w^{1-\gamma}}{1-\gamma} & if $T=0$,\\
		\E_z[\beta(z,z')v_{T-1}(R(z,z')w,z')] & if $T\ge 1$.
	\end{cases*}
	\label{eq:vT}
\end{equation}
Define the vector $a_T\in \R_+^Z$ by $a_0=1=(1,\dots,1)$ and $a_T=Ba_{T-1}$, so $a_T=B^T1$. Using \eqref{eq:vT} and the definition of $B$ in \eqref{eq:Bzz'_CRRA}, it is straightforward to show by induction that
\begin{equation}
	v_T(w,z)=a_T(z)\frac{w^{1-\gamma}}{1-\gamma}=(e_z'B^T1)\frac{w^{1-\gamma}}{1-\gamma},\label{eq:vTwz}
\end{equation}
where $e_z=(0,\dots,1,\dots,0)$ denotes the $z$-th unit vector in $\R^Z$. Since $B\ge 0$, $e_z\ge 0$, and $1\ge 0$, \eqref{eq:vTwz} can be regarded as a vector norm of $B^T$. Therefore by the generalization of the Gelfand formula \citep[Theorem 5.7.10]{HornJohnson2013}, we obtain
\begin{equation*}
	\lim_{T\to\infty} (v_T(w,z))^{1/T}=\rho(B).
\end{equation*}
Therefore if $\rho(B)>1$, then $v_T(w,z)\to\infty$ as $T\to\infty$, so the optimal value of the problem \eqref{eq:contract_os} is $\infty$.
\end{proof}
	
\printbibliography
	
\end{document}